\documentclass[10pt]{article}
\usepackage[utf8]{inputenc}
\usepackage{mathtools,
        mathrsfs,amssymb,amsthm, amsmath,
        bm}
\usepackage{comment}
\usepackage{hyperref}
\usepackage{url}
\usepackage[a4paper, left = 1in, right = 1in, top =1in, bottom = 1in]{geometry} 
\usepackage{dsfont}   
\usepackage[british]{babel}

\setlength{\footnotesep}{0.3cm}
\linespread{1.4}

\newtheorem{thm}{Theorem}
\newtheorem{pro}{Proposition}

\makeatletter
\renewenvironment{proof}[1][\proofname] {\par\pushQED{\qed}\normalfont\topsep6\p@\@plus6\p@\relax\trivlist\item[\hskip\labelsep\bfseries#1\@addpunct{.}]\ignorespaces}{\popQED\endtrivlist\@endpefalse}
\makeatother

\def\[#1\]{\begin{align*}#1\end{align*}}

\newcommand{\R}{\mathbb{R}}
\newcommand{\Z}{\mathbb{Z}}
\newcommand{\N}{\mathbb{N}}
\newcommand{\C}{\mathbb{C}}
\newcommand{\ceq}{\coloneqq}

\newcommand{\eps}{\varepsilon}
\newcommand{\E}{\mathbb{E}}
\renewcommand{\P}{\mathbb{P}}
\def\B{\mathscr{B}}

\newcommand{\ltx}[1]{\ \text{#1}}

\newcommand{\br}{\overline}
\newcommand{\I}{\mathds{1}}
\newcommand{\df}{\mathop{}\!\mathrm{d}}
\newcommand{\M}
{\mathbb{M}}
\newcommand{\F}
{\mathscr{F}}

\newcommand{\iu}{\mathrm{i}}
\newcommand{\A}{\mathscr{A}}
\newcommand{\sd}{\,\Delta\,}

\begin{document}

\title{Change of Measures for Spectral Stochastic Integrals}
\author{Yu-Lin Chou\thanks{Yu-Lin Chou, Institute of Statistics, National Tsing Hua University, Hsinchu 30013, Taiwan,  R.O.C.; Email: \protect\url{y.l.chou@gapp.nthu.edu.tw}.}}
\date{}
\maketitle

\begin{abstract}
Under mild conditions, it is possible to obtain, from almost purely
measure-theoretic considerations and without any specific reference
to stochastic processes, a change-of-measures result, resembling the
usual Radon-Nikod{\'y}m change of measures, associated with a variant
of stochastic integration for a spectral representation of covariance
stationary processes; the ideas are naturally embedded in the Hilbert
space theory of $L^{2}$ spaces. The intended main contribution, including
a complete proof of change of measures for spectral stochastic integrals,
is the refined, self-contained developments of spectral stochastic
integration toward change of measures.\\

{\noindent \textbf{Keywords:}} change of measures; orthogonal stochastic measures; spectral representation for covariance stationary processes; stochastic integration

{\noindent \textbf{MSC 2020:}} 60A10; 60H05; 60G10; 37M10
\end{abstract}

\section{Introduction }

For and only for our motivating account, a stochastic process is called
a \textit{covariance stationary process} if and only if the process
is a random element of $\C^{\Z}$ with components i) having finite
second moment, i.e. being $L^{2}$, ii) having a constant mean, and
iii) having the property that the covariance of any pair of the components
depends at most on the difference of their indexes. A well-known (classical)
version of Herglotz representation theorem (e.g. Section 1, Chapter
6, Shiryaev \cite{s}) asserts that, for every centered covariance
stationary process, there is some complex measure on the Borel sigma-algebra
$\B_{[-\pi, \pi[}$ of $\R$ relativized to $[-\pi, \pi[$, whose
total variation measure is concentrated on $[-\pi, \pi[$, such that
for every $n \in \Z$ the covariance function of the process at $n$
equals the integral of the function $\lambda \mapsto e^{\iu \lambda n}$
over $[-\pi, \pi[$ with respect to the complex measure. 

The Herglotz spectral representation result suggests, in a mathematically
natural way, if, for every $n \in \Z$, one can express the $n$th
component, rather than the covariance function, of a centered covariance
stationary process in terms of (modulo some underlying probability
measure) an integral of the function $\lambda \mapsto e^{\iu \lambda n}$
over $[-\pi, \pi[$ in a suitable sense. To this end, special attention
would be required as i) such an integral has to be ``stochastic”
and as ii) it is not clear that a naive pointwise definition could
always circumvent the possibilities of encountering functions that
are not of bounded variation. These well-known potential difficulties,
together with the desire to obtain a spectral representation result
for the components of a covariance stationary process parallel to
the aforementioned Herglotz represntation for the covariance function
of the process, thus logically motivate the developments of a type
of stochastic integration, which we refer to as \textit{spectral stochastic integration}.
The modifier ``spectral” signifies the purpose-specific aspect
of that kind of stochastic integration concerning us\footnote{Exploring the connections between spectral stochastic integration and other existing notions of stochastic integration is outside the scope of the present paper. Spectral stochastic integration might be likened to Paley-Wiener-Zygmund integration with a careful distinction, the latter of which is also concerned with assigning a suitable sense to integrating a ``deterministic" function with respect to a stochastic object.}. 

To the best of the author's knowledge, the corresponding theory to
spectral stochastic integration is \textit{de facto} scattered in
the related literature with relatively incomplete or cursory characterizations.
One of the most complete treatments of spectral stochastic integration
without loss of mathematical rigor known to the author would be Shiryaev
\cite{s}, which leverages the fact that every $L^{2}$ space of $\C$-valued
functions is a Hilbert space without bringing in too many context-unnecessary
concepts and results, and furnishes an outline of the framework. The
other would be Gikhman and Skorokhod \cite{gs}. Although a change-of-measures
result for spectral stochastic integration is given therein (with
only a brief proof), their definition for spectral stochastic integration
depends on several deeper results in analysis, and hence might unintentionally
obscure the simple nature of spectral stochastic integration. Moreover,
in some directions (with other directions fixed) their definition
is narrower than Shiryaev \cite{s}. 

Based on what is outlined in Shiryaev \cite{s}, the present paper
intends to complete a corner of the theory of spectral stochastic
integration by redeveloping a systematic, unified, and non-redundant
treatment to proving a natural change-of-measures result for spectral
stochastic integrals that resembles the usual Radon-Nikod{\'y}m version.
Our proof for change of measures, intended as a complete one, is different
than the proof idea sketched in the aforementioned Gikhman and Skorokhod
\cite{gs}. Although a working knowledge in stochastic processes would
be helpful in appreciating the theory of spectral stochastic integration,
we motivate the building concepts of such integration and arrange
the developments so that literally no working knowledge in stochastic
processes is demanded. As a whole, these novel treatments of ``known”
ideas are also intended both as a compact, citable reference and as
a contribution inclined to the pedagogical side.

\section{Result }

\subsection{Preliminary Developments}

To assign a suitable sense to that kind of stochastic integration
serving the purpose of ``spectrally” representing covariance stationary
processes, i.e. to our spectral stochastic integration, it surprisingly
suffices to employ a few natural requirements. Let $\Omega$ be a
probability space with probability measure $\P$; let $S \subset \Omega$
be nonempty; let $\A$ be an algebra of subsets of $S$. By an \textit{orthogonal elementary stochastic measure},
which is the building block for our spectral stochastic integral,
is meant a family $\M \equiv (\M(A))_{A \in \A}$ of complex random
variables $\in L^{2}(\P)$ on $\Omega$ such that i) $\M(\varnothing) = 0$
a.s.-$\P$, ii) $\M(A_{1} \cup A_{2}) = \M(A_{1}) + \M(A_{2})$ a.s.-$\P$
and $\E \M(A_{1})\br{\M(A_{2})} = 0$ for all disjoint $A_{1}, A_{2} \in \A$,
and iii) $A_{1}, A_{2}, \dots \in \A$ being disjoint and $\cup_{n \in \N}A_{n} \in \A$
imply $\E |\M(\cup_{n}A_{n}) - \sum_{j=1}^{n}\M(A_{j})|^{2} \to 0$.
Since $(X,Y) \mapsto \int_{\Omega} X\br{Y} \df \P = \E X\br{Y}$ is
an inner product for $L^{2}(\P)$, the defining properties of an orthogonal
elementary stochastic measure explain the terminology. Since every
component of $\M$ is by definition $L^{2}$, we can define the function
$m: A \mapsto \E|\M(A)|^{2}$ on $\A$. To justify the existence of
an orthogonal elementary stochastic measure, we give a stronger result
with a handy construction without any reference to the theory of stochastic
processes:

\begin{pro}\label{pro1}
For every probability space $(\Omega, \F, \P)$ there is some orthogonal elementary stochastic measure on $\Omega$.
\end{pro}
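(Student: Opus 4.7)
The plan is to construct an orthogonal elementary stochastic measure explicitly and universally on $(\Omega, \F, \P)$ by using indicators. First I would set $S \ceq \Omega$ and $\A \ceq \F$, which is an algebra (indeed a $\sigma$-algebra), and for every $A \in \F$ define $\M(A) \ceq \I_{A}$ viewed as a $\C$-valued random variable on $\Omega$. Since $\E|\I_{A}|^{2} = \P(A) \leq 1$, every $\M(A)$ belongs to $L^{2}(\P)$, so the family is eligible to serve as the sought $\M$.

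Next I would verify the three defining properties directly from properties of $\P$. Property (i) is immediate from $\I_{\varnothing} \equiv 0$. For disjoint $A_{1}, A_{2} \in \F$ the pointwise identity $\I_{A_{1} \cup A_{2}} = \I_{A_{1}} + \I_{A_{2}}$ yields the finite-additivity clause of (ii), and
\[
\E\, \M(A_{1})\br{\M(A_{2})} = \E [\I_{A_{1}}\I_{A_{2}}] = \P(A_{1} \cap A_{2}) = 0
\]
gives the orthogonality clause of (ii). For (iii), if $A_{1}, A_{2}, \dots \in \F$ are pairwise disjoint with $A \ceq \cup_{n}A_{n} \in \F$, disjointness yields $\I_{A} - \sum_{j=1}^{n}\I_{A_{j}} = \I_{\cup_{k > n}A_{k}}$ pointwise, so
\[
\E \left| \M(A) - \sum_{j=1}^{n}\M(A_{j}) \right|^{2} = \E\, \I_{\cup_{k > n}A_{k}} = \sum_{k > n}\P(A_{k}),
\]
which tends to $0$ as $n \to \infty$ because $\sum_{k}\P(A_{k}) = \P(A) \leq 1$.

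There is essentially no main obstacle: by design each of the three conditions collapses to a standard property of the probability measure $\P$, with $\sigma$-additivity of $\P$ supplying the $L^{2}$-$\sigma$-additivity in (iii) and disjointness supplying the $L^{2}$-orthogonality in (ii). The only bookkeeping worth flagging is that the pointwise-everywhere identities for indicators trivially upgrade to the almost-sure identities demanded in the definition, and that the complex conjugation $\br{(\cdot)}$ is invisible here since the chosen $\M(A) = \I_{A}$ are real-valued.
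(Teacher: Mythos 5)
Your proposal is correct and takes the same route as the paper: the paper also takes $\A = \F$ and $\M(A) = \I_{A}$, leaving the verification of the three axioms as "readily checked" and remarking that $\P$ plays the role of $m$. You have simply supplied the routine checks the paper omits, and they are all accurate.
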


\begin{proof}

Since $\F$ is also an algebra, we consider the family $(\I_{A})_{A \in \F}$
of the indicators of sets of $\F$. If $\M(A) \equiv \M(A, \cdot) \ceq \I_{A}$
on $\Omega$ for all $A \in \F$, it is readily checked from the usual
properties of indicator functions that $\M$ is an orthogonal elementary
stochastic measure on $\Omega$.

Indeed, as a side remark, the probability measure $\P$ plays the
role of $m$ above. \end{proof}

Now the definition of an orthogonal elementary stochastic measure
ensures the existence of the Carath{\'e}odory extension $M$ for
$m$ to $\sigma(\A)$. This observation enables us to employ the completeness
of $L^{2}(S, \sigma(\A), M)$ to define our spectral stochastic integrals.
The function $M$ will be referred to as the \textit{structural function}\footnote{This terminology would be in a sense self-evident once we recall that the spectral measure present in the Herglotz representation for the covariance function of a covariance stationary process happens to play the role of the structural function for  an orthogonal stochastic measure employed to represent the components of the covariance stationary process.}
for $\M$; and, later on, a triple of the form $(\M, M, \A)$ declares
$\M$ to be an orthogonal elementary stochastic measure with $M$
being its structural function defined on the sigma-algebra generated
by a given algebra $\A$ of sets.

Given any $\A$-simple function $f: S \to \C$ of the form
$f = \sum_{j=1}^{n}a_{j}\I_{A_{j}}$ where $a_{1}, \dots, a_{n} \in \C$
are distinct and where $A_{1}, \dots, A_{n} \in \A$, we define \[
\int_{S} f \df \M \ceq \sum_{j=1}^{n}a_{j}\M(A_{j}),
\] and refer to $\int_{S}f\df \M$ as the spectral stochastic integral
of $f$ with respect to $\M$. The convention of omitting the domain
of integration applies here, and we remark that $\int f \df \M \in L^{2}(\P)$
for all $\A$-simple $f: S \to \C$ by the very definition of $\M$.
The finite additivity and the orthogonality of $\M$ imply that $\E|\M (A \cap A')|^{2} = \E \M(A)\br{\M(A')}$
for all $A,A' \in \A$, which follows from a consideration over the
partition $A = (A \cap A') \cup (A \setminus A')$ for $A$ and the
same partition for $A'$; with more notation it then holds that \[
\E \bigg( \int f \df \M \bigg)\bigg(\br{ \int g \df \M }\bigg) = \int f\br{g} \df M
\]for all $\A$-simple $f,g: S \to \C$. Thus the space of all $\A$-simple
functions $f: S \to \C$ in $L^{2}(M)$ is inner-product homomorphic
to the space of their spectral stochastic integrals with respect to
$\M$. 

To extend the definition of our spectral stochastic integration for
arbitrary elements of $L^{2}(M)$, we claim that $\A$-simple functions
$S \to \C$ are $L^{2}$-dense in $L^{2}(M)$. This is not immediate
as the ``measurability” of the approximating sequence of simple
functions is now restricted to the algebra $\A$; the restriction
is reasonable as we have thus far defined our spectral stochastic
integration for and only for $\A$-simple functions. Fortunately,
the do-ability is not so covert; we begin by showing that for every
$B \in \sigma(\A)$ and every $\eps > 0$ there is some $A \in \A$
such that $M(A \sd B) < \eps$, which is a proposition usually left
as an exercise in textbooks. To see this, let $\mathscr{G}$ be the
collection of all such $B \in \sigma(\A)$. It is immediate that $\A \subset \mathscr{G}$
and that $\varnothing \in \mathscr{G}$. If $B \in \mathscr{G}$,
then the identity $A \sd B = A^{c} \sd B^{c}$ implies that $B^{c} \in \mathscr{G}$.
If $B_{1}, B_{2} \in \mathscr{G}$, then, since $(A_{1} \cup A_{2}) \sd (B_{1} \cup B_{2}) \subset (A_{1} \sd B_{1}) \cup (A_{2} \sd B_{2})$
for all $A_{1}, A_{2} \subset S$, choosing respectively the suitable
approximating $A_{1}, A_{2} \in \A$ for $B_{1}, B_{2}$ ensures that
$B_{1} \cup B_{2} \in \mathscr{G}$. If $B_{1}, B_{2}, \dots \in \mathscr{G}$,
then, as $M$ is a finite measure by the definition of $m$, there
is some $N \in \N$ such that $M(\cup_{n \geq N+1}B_{n})$ is as small
as desired; since $\cup_{n=1}^{N}B_{n} \in \mathscr{G}$, it follows
from the inclusion $\cup_{n \in \N}B_{n}\setminus \cup_{n=1}^{N}A_{n} \subset (\cup_{n=1}^{N}B_{n} \setminus \cup_{n=1}^{N}A_{n}) \cup (\cup_{n \geq N+1}B_{n})$
that $\cup_{n \in \N}B_{n} \in \mathscr{G}$. But then $\mathscr{G}$
is a sigma-algebra, and hence $\sigma(\A) \subset \mathscr{G}$. From
the identity $\int (\I_{A} - \I_{B})^{2} \df M = M(A \sd B)$, it
holds that the space of all $\A$-simple functions $S \to \C$ is
$L^{2}$-dense in that of all simple measurable functions $S \to \C$.
On the other hand, if $f \in L^{2}(M)$, and if $| \cdot |_{L^{2}(M)}$
denotes the $L^{2}$-norm of the space $L^{2}(M)$, then the $L^{2}$-denseness
of simple measurable functions in $L^{2}(M)$ and the (formal) triangle
inequality \[
|f - \psi|_{L^{2}(M)} \leq |f - \varphi|_{L^{2}(M)} + |\varphi - \psi|_{L^{2}(M)}
\](for $\psi$ being $\A$-simple and for $\varphi$ being simple measurable)
together imply the claim.

If we return to complete the definition of our spectral stochastic
integration, consider an arbitrary $f \in L^{2}(M)$. Since there
are some $\A$-simple functions $f_{1}, f_{2}, \dots: S \to \C$ such
that $|f_{n} - f|_{L^{2}(M)} \to 0$, the sequence $(f_{n})_{n \in \N}$
is $L^{2}$-Cauchy. But, if $| \cdot |_{L^{2}(\P)}$ denote the $L^{2}$-norm
of the space $L^{2}(\P)$, then the fact that \[
\bigg| \int f_{n} \df \M - \int f_{m} \df \M \bigg|_{L^{2}(\P)} = \bigg| f_{n} - f_{m} \bigg|_{L^{2}(M)}
\]for all $n,m \in \N$ and the completeness of $L^{2}(\P)$ jointly
imply that the sequence $(\int f_{n} \df \M)_{n \in \N}$ converges
in the space $L^{2}(\P)$ in the corresponding $L^{2}$ sense. The
spectral stochastic integral of $f$, denoted $\int f \df \M$, is
then defined as the $L^{2}$-limit of the sequence $(\int f_{n} \df \M)$.
Indeed, intuitively, the spectral stochastic integral of $f$ is invariant
in the choice of $(f_{n})$ as the principal ingredient of the above
construction is still a simple measurable $L^{2}$-approximating sequence
$(\varphi_{n})$ for $f$. We have completed the definition, based
on Shiryaev \cite{s}, of spectral stochastic integration with respect
to an orthogonal elementary stochastic measure for elements of the
space of $\C$-valued functions that are square-integrable with respect
to the structural function for the orthogonal elementary stochastic
measure.

\subsection{Change of Measures}

In this particular paragraph, we use the same notation as in the previous
subsection. If $g \in L^{2}(M)$, then $\I_{B}g \in L^{2}(M)$ for
all $B \in \sigma(\A)$, and our definition of a spectral stochastic
integral implies that $\int \I_{B}g \df \M \in L^{2}(\P)$ for all
$B \in \sigma(\A)$.

Now we prove the change-of-measures result for our spectral stochastic
integration:

\begin{thm}\label{thm}

Let there be given a probability space with probability measure $\P$;
let $\A$ be an algebra of subsets of a given subset of the probability
space; let $(\M_{1}, M_{1}, \A)$ be an orthogonal elementary stochastic
measure on the probability space; let $g \in L^{2}(M_{1})$. If $\M_{2}(B) \ceq \int \I_{B}g \df \M_{1}$
for all $B \in \A$, and if $m_{2}: B \mapsto \E |\M_{2}(B)|^{2}$
on $\A$, then i) the family $\M_{2} \equiv (\M_{2}(B))_{B \in \sigma(\A)}$
is an orthogonal elementary stochastic measure on the given probability
space with the Carath{\'e}odory extension $M_{2}$ of $m_{2}$ to
$\sigma(\A)$ being its structural function; and ii) \[
\int f \df \M_{2} = \int f g \df \M_{1} \,\,\ltx{a.s.-}\P
\]for all $f \in L^{2}(M_{2})$, where the $\P$-null set of the points
at which the equality possibly fails may depend on the choice of $f$. 

\end{thm}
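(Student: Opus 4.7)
The plan is to first verify that $\M_2$ satisfies the four axioms of an orthogonal elementary stochastic measure, then identify its structural function $M_2$ explicitly with the weighted measure $B \mapsto \int_B |g|^2 \df M_1$, and finally prove the change-of-variable formula by bootstrapping from $\A$-simple integrands through two compatible $L^2$-isometries.

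For part (i), the null-set axiom is trivial, finite additivity a.s.\ follows from the $\C$-linearity of the spectral stochastic integral on $L^2(M_1)$ (inherited from the simple case by continuous extension) applied to the identity $\I_{A_1 \cup A_2} g = \I_{A_1} g + \I_{A_2} g$, and orthogonality for disjoint $A_1, A_2 \in \A$ reduces, via the inner-product homomorphism already established in the preliminary subsection, to
\[
\E \M_2(A_1)\br{\M_2(A_2)} = \int \I_{A_1} g \br{\I_{A_2} g} \df M_1 = \int \I_{A_1 \cap A_2} |g|^2 \df M_1 = 0.
\]
For the $L^2$-countable-additivity axiom, disjointness of the $A_j$ together with linearity and the same isometry give
\[
\E \bigg|\M_2\bigg(\bigcup_k A_k\bigg) - \sum_{j=1}^n \M_2(A_j)\bigg|^2 = \int_{\cup_{k > n} A_k} |g|^2 \df M_1,
\]
which tends to zero because $|g|^2 \df M_1$ is a finite measure on $\sigma(\A)$ and the tail unions $\cup_{k > n} A_k$ decrease to $\varnothing$. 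To identify the structural function, the isometry gives $m_2(B) = \int_B |g|^2 \df M_1$ for every $B \in \A$; since $B \mapsto \int_B |g|^2 \df M_1$ is a finite measure on $\sigma(\A)$ agreeing with $m_2$ on $\A$, uniqueness of the Carathéodory extension forces $M_2(B) = \int_B |g|^2 \df M_1$ for all $B \in \sigma(\A)$. The crucial consequence is the norm identity $|h|_{L^2(M_2)} = |hg|_{L^2(M_1)}$ for every $h \in L^2(M_2)$, i.e.\ multiplication by $g$ embeds $L^2(M_2)$ isometrically into $L^2(M_1)$.

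For part (ii), the formula is immediate for $f = \I_B$ with $B \in \A$ by the very definitions of $\M_2$ and of the integral of an $\A$-simple function, and it extends to every $\A$-simple $f$ by $\C$-linearity of both sides. For arbitrary $f \in L^2(M_2)$, pick $\A$-simple $f_n \to f$ in $L^2(M_2)$ (possible by the density result of the preliminary subsection). The defining isometry of the integral with respect to $\M_2$ yields $\int f_n \df \M_2 \to \int f \df \M_2$ in $L^2(\P)$, while the key norm identity just proved gives $|f_n g - f g|_{L^2(M_1)} = |f_n - f|_{L^2(M_2)} \to 0$, whence $\int f_n g \df \M_1 \to \int f g \df \M_1$ in $L^2(\P)$ by the continuity of the integral with respect to $\M_1$. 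Since the simple case gives $\int f_n \df \M_2 = \int f_n g \df \M_1$ a.s.\ for each $n$, passing to an a.s.-convergent subsequence yields the desired equality off a $\P$-null set that generally depends on $f$.

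The main obstacle I anticipate is the explicit identification of $M_2$ with $|g|^2 \df M_1$ on $\sigma(\A)$, since this single identity simultaneously powers the $L^2$-countable-additivity verification in part (i) and the change-of-variable isometry $L^2(M_2) \to L^2(M_1)$ that drives the approximation argument in part (ii); once it is in place, the remaining steps are essentially formal consequences of the $L^2$ framework of the preliminary subsection.
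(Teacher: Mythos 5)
Your proposal is correct and follows essentially the same route as the paper: verify the axioms for $\M_{2}$ via linearity and the isometry on $\A$-simple functions, identify $M_{2}$ with $B \mapsto \int_{B}|g|^{2}\df M_{1}$, and pass from $\A$-simple integrands to general $f \in L^{2}(M_{2})$ by the density result and the two compatible $L^{2}$-isometries. The only cosmetic differences are that you invoke uniqueness of the Carath\'eodory extension to identify $M_{2}$ on all of $\sigma(\A)$ (the paper computes the norm identity directly) and you close with an a.s.-convergent subsequence where the paper cites essential uniqueness of the $L^{2}$-limit.
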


\begin{proof}

Indeed, as readily seen from our definition of spectral stochastic
integration, a spectral stochastic integration operator acting on
the vector space $L^{2}$ with respect to the structural function
enjoys linearity and preserves inner product.

To prove i), we first observe that $\M_{2}(\varnothing) = 0$. The
linearity of a spectral stochastic integration operator implies $\M_{2}(A_{1} \cup A_{2}) = \int\I_{A_{1}}g \df \M_{1} + \int \I_{A_{2}}g \df \M_{1} = \M_{2}(A_{1}) + \M_{2}(A_{2})$
for all disjoint $A_{1}, A_{2} \in \A$. The orthogonality of $\M_{2}$
follows from the trivial equality $\I_{A_{1} \cap A_{2}}|g|^{2} = 0$
for all disjoint $A_{1}, A_{2} \in \A$ and from the inner-product-preserving
property of a spectral stochastic integration operator. To see the
countable additivity in the $L^{2}(\P)$ sense only takes the following
(formal) relations \[
\M_{2}(\cup_{n}A_{n}) - \sum_{j=1}^{n}\M_{2}(A_{j})
&= \int \I_{\cup_{n}A_{n}}g \df \M_{1} - \sum_{j=1}^{n}\int \I_{A_{j}}g \df \M_{1}\\
&\leq \int \I_{\cup_{n}A_{n} \setminus \cup_{j=1}^{n}A_{j}} g \df \M_{1};\\
\bigg| \int \I_{\cup_{n}A_{n} \setminus \cup_{j=1}^{n}A_{j}} g \df \M_{1} \bigg|^{2}_{L^{2}(\P)} 
&= \int \I_{\cup_{n}A_{n} \setminus \cup_{j=1}^{n}A_{j}} |g|^{2} \df M_{1}\\
&\to 0,
\]where we have acknowledged the linearity and the norm-preserving properties
of a spectral stochastic integration operator and the monotone convergence
theorem. By the argument present in the beginning of the present subsection,
it follows that $\M_{2}$ is an orthogonal elementary stochastic measure
on the given probability space. Now the definition of $m_{2}$ and
a Carath{\'e}odory extension applied to $m_{2}$ together complete
the proof of i).

To finish the whole proof, we bring in the fact that our spectral
stochastic integration operators enjoy, from the linearity and the
norm-preserving property, a ``continuity” property in the sense
that a sequence of suitable integrands converging in the suitable
$L^{2}$ sense to an $L^{2}$ integrand implies the convergence of
the corresponding sequence of spectral stochastic integrals in the
suitable $L^{2}$ sense to the spectral stochastic integral of the
limiting integrand. Let $f \in L^{2}(M_{2})$. Then, as was shown,
there are some $\A$-simple $f_{1}, f_{2}, \dots$ such that $f_{n} \to f$
in $L^{2}$; so $\int f_{n} \df \M_{2} \to \int f \df \M_{2}$ in
$L^{2}(\P)$. Since \[
M_{2}(B) 
&= |\M_{2}(B)|^{2}_{L^{2}(\P)}\\ 
&= \bigg| \int \I_{B}g \df \M_{1} \bigg|^{2}_{L^{2}(\P)}\\
&= \int \I_{B}|g|^{2} \df M_{1}
\]for all $B \in \sigma(\A)$, we have \[
\bigg| \int f_{n} \df \M_{2}  - \int f \df \M_{2} \bigg|_{L^{2}(\P)}^{2}
&= \bigg| \int (f_{n} - f) \df \M_{2} \bigg|_{L^{2}(\P)}^{2}\\
&= \bigg| f_{n} - f \bigg|_{L^{2}(M_{2})}^{2}\\
&= \int |f_{n} - f|^{2} \df M_{2}\\
&= \int |f_{n} - f|^{2}|g|^{2}\df M_{1};
\]so $(f_{n}-f)g \to 0$ in $L^{2}(M_{1})$, and the continuity property
of our spectral stochastic integration implies that \[
\bigg| \int (f_{n} - f)g \df \M_{1} \bigg|_{L^{2}(\P)} \to 0.
\] But \[
\int f_{n} \df \M_{2} = \int f_{n} g \df \M_{1}
\]for all $n \in \N$ by the definition of $\M_{2}$ and the linearity
of our spectral stochastic integration operators; the essential uniqueness
of $L^{2}$-limit then completes the proof.\end{proof}

\end{document}